\newtheorem{theorem}{Theorem}[section]
\newtheorem{proposition}[theorem]{Proposition}
\theoremstyle{definition}
\newtheorem{example}[theorem]{Example}
\renewcommand{\a}{\alpha}
\newcommand{\e}{\epsilon}
\newcommand{\G}{\Gamma}
\renewcommand{\o}{\omega}
\renewcommand{\O}{\Omega}
\newcommand{\p}{\pi}
\newcommand{\ph}{\phi}
\newcommand{\pa}{\partial}
\renewcommand{\r}{\rho}
\newcommand{\Sig}{\Sigma}
\newcommand{\sig}{\sigma}
\newcommand{\we}{\wedge}
\numberwithin{equation}{section}
\begin{document}




\title[Maximal  domains  for analytic quasilinear PDEs ]{Maximal domains of solutions  for  analytic quasilinear  differential equations of first order  }

\author[C.-K. Han]{Chong-Kyu Han}
\address[C.-K. Han]{Research institute of Mathematics
\\ Seoul National University\\ 1 Gwanak-ro, Gwanak-gu\\ Seoul 08826, Republic of Korea}
\email{ckhan@snu.ac.kr}
%

\author[T. Kim]{Taejung Kim}
\address[T. Kim]{Department of Mathematical Education\\ Korea National University of Education\\ 250 Taeseongtabyeon-ro,  Gangnae-myeon\\ Heungdeok-gu,  Cheongju-si \\ Chungbuk 28173, Republic of Korea}
\email{tjkim@kias.re.kr}

\date{\today}

\thanks{The  authors were supported by  NRF-Republic of Korea with  grants
 0450-20210049 (C.-K. Han) and  2018R1D1A3B07043346 (T. Kim), respectively.}

\begin{abstract}  We study the  real-analytic continuation of  local real-analytic solutions to the Cauchy problems of  quasi-linear partial differential equations of first order for a scalar function.    By making use of the first integrals of the characteristic vector field and the implicit function theorem we  determine the maximal domain of the analytic extension of a local solution as a single-valued function.  We present some examples including the scalar conservation laws that admit global first integrals so that our method is applicable.
\end{abstract}

\subjclass[2020]{Primary 35F25, 35L67;   Secondary 32K15, 58C15;  }

\keywords{real-analytic continuation, quasi-linear PDE of first order, first integrals, characteristic curves, scalar conservation laws}

\maketitle

\section{Introduction}\label{intro}
An important feature of  analytic functions,  either  in  complex or real variables,   is that a germ of a function determines the  function globally.   In complex analysis, special functions like the Riemann zeta function or the gamma function are defined  by defining equations in part of the complex plane  and then extended by analytic continuation.  For those two special functions the maximal domain is
 the whole complex plane except for the poles. However, for  functions like  $ \log z $ or $\sqrt{z} $    analytic continuation leads to a multi-valued function so that the  domain of maximal analytic continuation is a Riemann surface that  cannot be  embedded  in the complex plane.
We shall say there is no maximal domain  for  $\log z $ or for $\sqrt{z}$.    A domain $U\subset \Bbb C$ is  the maximal domain of an analytic function $f$ if $f$ cannot be analytically continued across any of the boundary point of $U$.
It is not difficult to show that any domain in $\Bbb C$ is a maximal domain for some complex analytic function.  As for an analytic function of several complex variables a domain  being maximal is the  notion of domain of holomorphy.

We study  in this paper how to determine  the maximal domains for real-analytic  functions   given as  solutions of  differential equations with analytic  data.  This problem seems  to be difficult in general.   However, for  quasi-linear partial differential equations that have globally defined first integrals one can decide the maximal domain  by applying the implicit function theorem to the  first integrals determined by the initial data  (Theorem \ref{main}).

A function $f(x)$  in $n$ real variables $ x=(x_1, \ldots, x_n)   $  defined on a connected
 open set $U\subset \Bbb R^n$   is said to be analytic if
at every point $p = (p_1, \ldots, p_n) \in U,  $  $~f(x) $  is representable as a  convergent power series in
$(x-p) := (x_1-p_1, \ldots, x_n-p_n),  $
   that is, $f(x)$ is locally equal to its Taylor series.
  We shall denote by $ C^{\o}(U)$  the set of analytic functions in $U$.
A function $ f_1 \in C^{\o}(U_1)$,  where $U_1$ is a connected open set with $U\cap U_1 \neq \emptyset$,
is called a direct analytic continuation  of $f$  if $f(x)= f_1(x)  $  for all  $x\in U\cap U_1$.
If $U_1 \supset U, $ the direct analytic continuation $f_1$ shall be called an {\it analytic extension} of $f$.   Now
 consider a sequence   $f_k \in C^{\o}(U_k), $  $k=1, \ldots, N,$  where $f_k $ is a direct analytic continuation of $f_{k-1}$ such that   $U\cap U_N\neq\emptyset. $    For any point   $x\in U\cap U_N,$
 $f_N(x)$ need not be the same as $f(x)$  as we see in the following example.  Thus in general, a sequence of direct analytic continuations yields a multi-valued  function.

\begin{example}
  Let $U\subset \Bbb R^2 $ be an open disk of radius $1/2$ centered at $(1,0) $ and
let
$f(x_1, x_2):= \tan^{-1} (x_2/x_1).$   Then  $f_N(x) $ and $f(x)$  may  differ by a multiple of $2\p$.
\end{example}

A boundary point $p$ of $U$ is said to be regular if $p$ has a neighborhood to which $ f$ continues analytically,
that is, there is a neighborhood $N(p)$ of $p$ and a direct analytic continuation $f_1 \in C^{\o}(N(p)) $ of $f$.
If a boundary point of $U$ is not regular it is said to be singular.    If every boundary point of $U$ is singular  then  $U$ is the maximal
domain of  extension and  $f$  is maximally extended as a single-valued function.

  We observe that for an analytic differential equation
 an analytic extension of a local analytic solution is also a solution, thus the maximal extension gives  the global solution (Theorem~\ref{maximal solution}).

A differential equation is said to be quasi-linear if it is linear in the highest order
 derivatives of the unknown function.   Thus a   quasi-linear PDE
of first order  for $u(t,x)$, $~x=(x_1, \cdots, x_n),~$ can be written as
\begin{equation}\label{pde}
\a(t,x,u) u_t~  + ~\sum_{k=1}^n a_k(t,x,u) u_{x_k} = b(t, x,u),
\end{equation}  where $  u_t := \frac{\partial u}{\partial t}, ~ u_{x_k} := \frac{\partial u}{\partial x_k}.  ~$   We assume  $\a$ is nowhere vanishing and
find a  solution
subject to the initial condition
\begin{equation}\label{data}
u(0, x) = h(x), \quad  | x| < \epsilon
\end{equation}  for an arbitrarily  small  $\e>0.$
We shall call
\begin{equation}\label{gamma1}\Gamma:=\{(0,x,h(x)):   |x| < \epsilon \}
\end{equation}  the initial  set.
We assume  that all the coefficients $\a,$ $a_k$ and $b$ are real-analytic ($C^{\o}$)  in a connected open set
$\O\subset \Bbb R^{n+2} = \{(t,x,u)\} $ that contains $\G.$
We also assume  that
$h(x)$ is $C^{\o}$  in a small ball $|x|<\e. $
Then by the Cauchy-Kowalevski theorem there is a unique $C^{\o}$  solution $u(t,x) $ on a neighborhood of the origin of
$\mathbb R^{n+1}=\{(t,x)\}.$

The Cauchy-Kowalevski theorem is an existence theory that is applicable to a wider class of analytic differential equations with analytic initial data. But for  quasi-linear PDEs of first order, one can prove the existence of $C^{\o}$ solution on an open  neighborhood
of
$\{(0, x) : |x| < \e\}$ in $\mathbb R^{n+1} $ by the method of characteristics.
By the uniqueness of analytic solutions the solution obtained by  the method of characteristics and the
one
by the Cauchy-Kowalevski theorem are equal in their common domain.  Then the uniqueness of the analytic extension  implies the followings,  which are
  rather surprising.

\begin{itemize}
\item[i)] An analytic extension  $\hat u(t,x) $ of this local solution $u(t,x)$ satisfies (\ref{pde}).

\item[ii)] Its initial value $\hat{u} (0,x)$ is  an  analytic extension   of  $h(x) $ if the set $\{(0,x)\in \text{domain of }\hat{u}\} $ is connected.
\end{itemize}

 Suppose that a local solution to  (\ref{pde})-(\ref{data})  is  given implicitly as
\begin{equation} \label{simultaneous}  {F}(t,x,u) = 0, \end{equation}
  where $F$ is  analytic in $\O$,   vanishes on  $\G , $  and satisfies the nondegeneracy condition
\begin{equation}\label{simultaneous20} {F}_u (t,x,u)  \neq 0, \end{equation}
so that one obtains an  explicit solution $u(t,x)$   by the implicit function theorem.  Then the  zero locus of $F$ is locally the graph of the  explicit solution $u(t,x).$
Now let  $\Sig$ be  the set defined by    (\ref{simultaneous})  and $\sigma \subset \Sigma$ be the subset given by
\begin{equation}\label{sigma^n}  {F}_u (t,x,u)  =  0. \end{equation}
If needed we specify the  dimensions by superscripts as   in $\sig^n,  $  $ \Sig^{n+1},  $ and $\G^n, $ respectively.
Let $\pi $ be the projection $ (t,x,u) \stackrel{\pi}{\rightarrow} (t,x).$  We shall show in \S~\ref{psec3}  that
the  connected component of $\Sigma \setminus \sigma $ that contains $\Gamma$
is the graph of the maximally extended solution so that its image under $\pi$  is the  maximal  domain of analytic extension  of the local solutions to (\ref{pde})-(\ref{data})
assuming that  the characteristic vector field of (\ref{pde}) admits $n+1$ first integrals that are defined globally on $\O$ (see   \S~\ref{psec2} and \S~\ref{psec3} for definitions).
In particular, our method is  useful for scalar conservation laws, where (\ref{pde}) has the form
\begin{equation}\label{cons law general}
u_t + \sum_{k=1}^n a_k(u) u_{x_k} = 0.
\end{equation}
The functions
$$ \begin{aligned} \r_1 & := u \\
\r_k &:= x_k - a_k(u) t, ~~\quad k=1, \cdots, n
\end{aligned} $$
are the first integrals of (\ref{cons law general}) that are defined globally.


\section{Preliminaries }\label{psec2}
\subsection{Real analyticity  }   We review some basic facts on real analytic functions.
\vskip 1pc
\begin{theorem}\label{identity thm} {\text (Identity theorem)}
Suppose that  $V\subset \Bbb R^n, $ for any positive integer $n$,  is a connected open set  and  $f\in C^{\o}(V)$  is identically  zero on a small open ball that is contained in $V. $  Then  $f$ is identically zero on $V$.
\end{theorem}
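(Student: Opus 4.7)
The plan is to prove this by a standard connectedness argument, exploiting the fact that a real analytic function equals its Taylor series on some neighborhood of every point. The main idea is to define the set
\[
A := \{ p \in V : f \equiv 0 \text{ on some open neighborhood of } p \},
\]
and show that $A$ is nonempty, open, and closed in $V$; since $V$ is connected, this will force $A = V$.

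First I would observe that $A$ is nonempty by hypothesis, since $f$ vanishes on some open ball contained in $V$. Openness is essentially tautological: if $p \in A$, then by definition there is an open neighborhood $U$ of $p$ on which $f \equiv 0$, and every point of $U$ has $U$ itself as a witnessing neighborhood, so $U \subset A$.

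The substantive step is to show that $A$ is closed in $V$. Here I would introduce the auxiliary set
\[
B := \{ p \in V : \partial^\alpha f(p) = 0 \text{ for every multi-index } \alpha \},
\]
and prove $A = B$. The inclusion $A \subset B$ is immediate, since $f$ vanishing on a neighborhood of $p$ forces all partial derivatives at $p$ to vanish. For the reverse inclusion $B \subset A$, I would use analyticity: if $p \in B$, then on some open ball around $p$, $f$ equals its Taylor series centered at $p$, and that Taylor series is identically zero, so $f$ vanishes on that ball, giving $p \in A$. Finally, $B$ is closed in $V$ because it is the intersection of the closed sets $\{ p \in V : \partial^\alpha f(p) = 0 \}$, each closed by continuity of $\partial^\alpha f$. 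Thus $A = B$ is both open and closed and nonempty in the connected set $V$, so $A = V$, which gives $f \equiv 0$ on $V$.

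The main obstacle, if one can call it that, is the identification $A = B$: one must genuinely invoke the defining property of real analyticity (local representability by a convergent power series that actually equals the function), not merely smoothness, since the analogue fails for $C^\infty$ functions. Everything else is a routine point-set topology argument exploiting the continuity of all derivatives and the connectedness of $V$.
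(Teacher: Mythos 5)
Your proof is correct and complete: it is the standard open--closed connectedness argument, with the set $B$ of points where all partial derivatives vanish serving as the bridge between ``vanishing on a neighborhood'' (open) and a condition that is manifestly closed. The paper states this theorem without proof, as part of a review of basic facts, so there is no argument to compare against; your write-up correctly isolates the one step where real analyticity (equality of $f$ with its Taylor series near each point, per the paper's definition in \S\ref{intro}) is genuinely needed, which is exactly the step that fails for merely $C^{\infty}$ functions.
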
  Theorem \ref{identity thm}  implies the following

\begin{theorem} \label{unique} (Uniqueness of the analytic extension)
Let  $U$  and $V$ are connected open subsets of $\Bbb R^n$ with    $U\subset V  $ and let
 $f\in C^{\o}(U). $  Suppose that
$F_1, F_2 \in C^{\o}(V)$   are  extensions   of $f. $   Then $F_1=F_2.$
\end{theorem}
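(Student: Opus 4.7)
The plan is to reduce the claim to a direct application of the Identity theorem (Theorem~\ref{identity thm}). I would form the difference $G := F_1 - F_2$, which lies in $C^{\o}(V)$ since both $F_1$ and $F_2$ are analytic on $V$ and $C^{\o}(V)$ is closed under subtraction.

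Next, I would observe that by hypothesis $F_1|_U = f = F_2|_U$, so $G \equiv 0$ on $U$. Since $U$ is a non-empty open subset of $\mathbb{R}^n$, it contains at least one small open ball $B \subset U \subset V$, and $G$ is identically zero on $B$.

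Finally, because $V$ is connected and open and $G \in C^{\o}(V)$ vanishes on the open ball $B \subset V$, Theorem~\ref{identity thm} applies directly and gives $G \equiv 0$ throughout $V$. This is precisely the conclusion $F_1 = F_2$ on $V$.

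The argument is essentially a two-line invocation of the identity theorem, so there is no real obstacle; the only point to verify carefully is that $U$, being a non-empty connected open subset of $\mathbb{R}^n$, contains an open ball on which the hypothesis of Theorem~\ref{identity thm} is met, and that connectedness of $V$ (not $U$) is what allows the vanishing to propagate to all of $V$.
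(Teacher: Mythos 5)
Your proof is correct and is exactly the argument the paper intends: the paper gives no explicit proof, merely asserting that Theorem~\ref{identity thm} implies the statement, and your reduction via $G = F_1 - F_2$ vanishing on a ball in $U$ is the standard way to carry that out. Your side remark that the relevant connectedness is that of $V$ (and that $U$ must be non-empty to supply the ball) is the only point of substance, and you handle it correctly.
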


The implicit function theorem states that if $ F(x,u) $ is a smooth function in the variables   $x=(x_1, \cdots, x_n)  $
and $u$,  and if
$F_u \neq 0 $ at a point $(a, b)$, $a=(a_1, \cdots, a_n), $  then   $F(x,u)=0 $ is solvable for $u$  as a  function
of $x, $ namely,  there is a function $f(x)$ with $f(a) = b, $  such that $F(x,f(x)) = 0, $ for all $x$ in a neighborhood of $a.$
The analytic implicit function theorem states  that if $F$ is $C^{\o}$ then $f$ is $C^{\o}.$  More precisely,

\begin{theorem}\label{implicit function} (Analytic implicit function theorem)   Let $F$ be $C^{\o}$ on an open subset of $\Bbb R^{n+1} = \{(x,u)\}$, $x=(x_1, \ldots, x_n).$  Suppose that $F (x, u)$ is  $C^{\o}$ in its arguments
and that  $F_u (a,b) \neq 0.$  Then there exists an open neighborhood  $U\subset  \Bbb R^n$ of $a$  and  $f\in C^{\o}(U)$  with
$f(a)=b $  such that
$$  F(x, f(x))=0, \quad \forall x\in U.$$
\end{theorem}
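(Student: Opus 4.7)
The plan is to reduce to $a = 0$, $b = 0$, $F(0,0) = 0$ by an affine change of coordinates (the hypotheses tacitly include $F(a,b)=0$, otherwise the conclusion is vacuous). After this normalization, $F_u(0,0) \neq 0$, and I seek an analytic $f$ near $0 \in \Bbb R^n$ with $f(0)=0$ and $F(x, f(x)) \equiv 0$. Two routes are natural; I would present the first as the main line and mention the second for completeness.

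\emph{Complexification.} Since $F$ is real-analytic near the origin, its Taylor series converges absolutely on some polydisc $\{|x_j|<r,\; |u|<r\}$ inside $\Bbb C^{n+1}$, and therefore defines a holomorphic extension $\tilde F(z,w)$ satisfying $\tilde F(0,0)=0$ and $\partial_w \tilde F(0,0)=F_u(0,0)\neq 0$. The holomorphic implicit function theorem, obtained for instance from the Weierstrass preparation theorem or more concretely by applying Rouch\'e's theorem in the single variable $w$ with parameter $z$, produces a holomorphic $\tilde f$ on a polydisc about $0\in \Bbb C^n$ with $\tilde f(0)=0$ and $\tilde F(z,\tilde f(z))\equiv 0$. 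Restricting $\tilde f$ to $\Bbb R^n$ yields a $C^{\o}$ candidate; to see that it is real-valued, one invokes the classical smooth implicit function theorem to produce a $C^1$ real-valued solution $f_{\mathrm{sm}}$ and uses the local uniqueness of solutions of $\tilde F(z,w)=0$ under $\partial_w \tilde F\neq 0$ (an instance of Theorem \ref{unique} after extending both functions holomorphically) to identify $\tilde f|_{\Bbb R^n}$ with $f_{\mathrm{sm}}$. Setting $f:=\tilde f|_{\Bbb R^n}$ completes the proof.

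\emph{Method of majorants.} Alternatively, write $F(x,u)=\sum_{\alpha,k}A_{\alpha k}x^{\alpha}u^{k}$ with $A_{00}=0$ and $A_{01}=F_u(0,0)\neq 0$, and seek a formal series $f(x)=\sum_{\alpha}c_\alpha x^\alpha$ with $c_0=0$. Substitution and equating coefficients of each $x^\alpha$ yields a recursion that determines $c_\alpha$ as a polynomial in the $A_{\beta k}$ and the $c_\gamma$ with $|\gamma|<|\alpha|$, divided by $A_{01}$, so that a unique formal solution exists. The obstacle is convergence: this is handled by Cauchy's majorant method, dominating $F$ by a function of the form $M/\bigl((1-(x_1+\cdots+x_n)/r)(1-u/r)\bigr)$ after isolating the distinguished linear term $A_{01}u$, and then solving the resulting scalar algebraic majorant equation explicitly to obtain a geometric bound on $|c_\alpha|$ and hence convergence of $f$ on a neighborhood of $0$.

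The main difficulty in either route is establishing the \emph{analyticity}, not merely the smoothness, of $f$; the existence as a formal or $C^1$ object is comparatively routine. In the complexification approach the work is packaged inside the holomorphic implicit function theorem, whose proof relies on one-variable complex analysis; in the direct route the difficulty sits in the majorant convergence estimate. I would present the complexification argument because it harmonizes with the analytic framework used elsewhere in the paper and makes the link to Theorems \ref{identity thm} and \ref{unique} transparent.
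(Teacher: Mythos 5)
Your proposal is correct. Note, though, that the paper does not actually prove Theorem \ref{implicit function}: it only remarks that the result follows by the method of majorants applied to the power series of $F$, or alternatively by an application of the Cauchy--Kowalevski theorem, and refers to \cite{Kr} for details. Your second route (Cauchy's majorant method) is precisely the paper's first suggestion, fleshed out with the correct recursion-plus-domination structure; your first route (complexify $F$, apply the holomorphic implicit function theorem via Rouch\'e's theorem, and identify the restriction to $\Bbb R^n$ with the real $C^1$ solution by uniqueness of the zero in $w$) is a third, equally standard alternative, and it is in the spirit of how the paper handles Theorem \ref{fundamental}, where real-analytic ODE existence is obtained by complexifying the variables. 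Your observation that the hypothesis $F(a,b)=0$ is tacit but necessary is a fair catch --- the statement as printed omits it. The only loose end in your sketch is in the majorant route, where "isolating the distinguished linear term" should be made precise by first rewriting the equation in the fixed-point form $u=\Phi(x,u)$ with $\Phi$ free of constant and linear-in-$u$ terms before majorizing; this is routine but is where the actual estimate lives.
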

One can prove Theorem \ref{implicit function} by the method of majorants for the power series expansion of $F$:
\begin{equation*}\label{majorants}  F(x,u) = \sum _{\a, k} a_{\a, k} x^{\a} u^k , \end{equation*}  where $\a = (\a_1, \cdots, \a_n) $ is a  multi-index and $k $ is  a non-negative integer.   Recall that  the Cauchy-Kowalevski theorem is proved by the method of the majorant.    Theorem \ref{implicit function} can be also proved  by an application of the Cauchy-Kowalevski theorem.  We refer the readers to  \cite{Kr} for the proofs.

\vskip 1pc
Proofs for the existence and uniqueness of solutions of  ODEs are based on the convergence of the iteration of the integral operator associated to the differential equation.  An analytic  version is the following
\begin{theorem}\label{fundamental} (Existence theorem for $C^{\o}$ ODEs)  Let $ g = (g_1, \cdots, g_n) $ be a system of $C^{\o}$ functions in $(t,x) \in U \subset \Bbb R^{n+1}$, $ x=(x_1, \cdots, x_n).$ Then for the initial value problem
\begin{equation*} \frac{dx}{dt} = g(t,x),   \quad  x(0) = x_0,  \end{equation*} for any $x_0$ with $(0,x_0)\in U, $
  there exists a unique system of  analytic  functions $\phi (t) = (\ph_1(t), \cdots, \ph_n(t))$ that satisfies
  $\phi(0)=x_0 $ and
  \begin{equation*}\label{analytic ode}
  \frac{d\phi}{dt} = g(t, \phi(t)), \quad |t|<\e, ~ \text{ for some } \e>0.
  \end{equation*}
\end{theorem}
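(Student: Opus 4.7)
The plan is to imitate the classical method of majorants used for Cauchy--Kowalevski, specialized to the case of zero spatial variables. After the translation $x \mapsto x - x_0$ we may assume $x_0 = 0$, so that each $g_i$ is $C^{\o}$ in a closed polydisc $\{|t| \le r,\ |x_j| \le r\}$ on which $|g_i| \le M$. Cauchy's inequalities then give coefficients $|a^{(i)}_{k,\a}| \le M r^{-(k+|\a|)}$ in the expansion $g_i(t,x) = \sum a^{(i)}_{k,\a} t^k x^{\a}$.

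First, I would seek a formal power series solution $\ph(t) = \sum_{m\ge 1} c_m t^m$ with $c_m \in \Bbb R^n$. Substituting into $\ph' = g(t,\ph)$ and equating coefficients of $t^{m-1}$ on the two sides yields a recursion that determines each $c_m$ uniquely as a universal polynomial, with \emph{non-negative} rational coefficients, in the data $\{a^{(i)}_{k,\a}\}_{k + |\a| < m}$ together with the previously computed $c_1, \ldots, c_{m-1}$. The non-negativity of these coefficients is the crucial feature of the majorant method: replacing each $a^{(i)}_{k,\a}$ by any upper bound yields componentwise upper bounds on the $c_m$.

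Second, I would construct an explicit scalar majorant. A convenient choice is the separable ODE
\[
\frac{dy}{dt} = \frac{M}{(1 - t/r)(1 - ny/r)}, \qquad y(0) = 0,
\]
whose right-hand side dominates each $|g_i(t,x)|$ as soon as $|x_j| \le y$ for all $j$. Solving explicitly gives an analytic $Y(t) = \sum C_m t^m$ with $C_m \ge 0$, convergent on some interval $|t| < \e$. Running the recursion of the previous step with the majorant in place of $g$ produces exactly the Taylor coefficients of $Y$, and an induction on $m$ then gives $C_m \ge |(c_m)_i|$ for every $i$. Consequently the formal series for $\ph$ converges on $|t| < \e$ and, being a formal solution, satisfies $\ph' = g(t,\ph)$ there.

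Uniqueness is immediate, since any analytic solution must have Taylor coefficients at $0$ dictated by the same recursion; thus two analytic solutions agree on a neighborhood of $0$ and hence on any connected common domain by Theorem~\ref{identity thm}. The only technical point I expect to be delicate is verifying the non-negativity claim for the recursion, but this reduces to the observation that $m!\,c_m$ equals the $(m-1)$-st derivative of $g(t,\ph(t))$ at $t=0$ computed by the chain rule, which involves only non-negative combinatorial constants and products of the $a^{(i)}_{k,\a}$ and the earlier $c_j$. Alternatively, one could simply invoke the Cauchy--Kowalevski theorem, since an analytic ODE system in normal form is a (degenerate) first-order quasi-linear PDE system with no spatial variables; the argument above is precisely the specialization of Cauchy's majorant proof to that setting.
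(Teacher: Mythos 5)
Your argument is correct, but it is not the route the paper takes: the paper does not write out a proof at all, and its one-line indication is to \emph{complexify} $t$ and $x$, run the Picard iteration of the associated integral operator in the complex domain, and conclude analyticity of the limit from the fact that a uniform limit of holomorphic functions is holomorphic (citing Coddington--Levinson). You instead give Cauchy's original majorant proof: determine the Taylor coefficients $c_m$ of $\phi$ by the recursion $m c_m = [t^{m-1}]\,g(t,\phi(t))$, observe that this recursion is a polynomial with non-negative coefficients in the data, and dominate it by the explicitly solvable scalar problem $y' = M\big((1-t/r)(1-ny/r)\big)^{-1}$, $y(0)=0$. This is sound; the only point worth tightening is that what the majorant argument actually requires is coefficient-wise domination, not the pointwise bound ``dominates $|g_i(t,x)|$ when $|x_j|\le y$'' as you phrase it --- but your chosen majorant does dominate coefficient-wise, since $[t^k y^m]$ of $M\big((1-t/r)(1-ny/r)\big)^{-1}$ is $Mn^m r^{-k-m}\ge M\binom{m+n-1}{n-1}r^{-k-m}$, which bounds the relevant coefficient sums of each $g_i$ after substituting $y$ for every $x_j$. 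What each approach buys: the complexification/iteration proof is shorter given standard complex-analysis facts and yields a quantitative domain of holomorphy in $t$; your majorant proof is more elementary and self-contained, makes the uniqueness of the \emph{analytic} solution transparent (the coefficients are forced by the recursion, then the identity theorem extends agreement), and, as you note, exhibits the theorem as the zero-spatial-variable case of Cauchy--Kowalevski, which fits the paper's own remark that Theorem~\ref{implicit function} and Cauchy--Kowalevski are proved by the same majorant technique.
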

One can prove Theorem \ref{fundamental} by complexifying the variables $t $ and $x$ and using the fact that a uniform limit of complex analytic functions is complex analytic (see \cite{CL}).

\subsection{Analytic extension  of local solutions}
Let us consider a system of  analytic differential equations in its most general setting
\begin{equation}\label{most general} \triangle (x,u^{(m)}) = 0 \end{equation}
where $\triangle = (\triangle_1, \cdots, \triangle_{\ell})$ is a system of $\ell$ partial differential equations of order $m$ for  unknown functions $u=(u^1,  \cdots, u^q) $ in $n$  independent variables  $x=(x_1, \cdots, x_n) \in \Omega\subset \Bbb R^n,  $  and $u^{(m)} $ is  the partial derivatives of $u$ of order up to $m$.  We assume that  $\Omega $ is a connected open set.   Our basic observation is the following

\begin{theorem}  \label{maximal solution}
Suppose (\ref{most general}) is analytic,  that is,  $\triangle$ is  $C^{\o}$ in its arguments,  and
that an analytic function $u=f(x)$ defined on a small neighborhood of $p\in \O$  satisfies  (\ref{most general}).  Let  ${\hat f }(x) $  be  an analytic extension   of $f(x)$  to  $\O. $    Then $\hat f (x)$ satisfies (\ref{most general}).
\end{theorem}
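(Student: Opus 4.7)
The plan is to reduce the claim to a direct application of the identity theorem (Theorem~\ref{identity thm}). The key observation is that plugging an analytic function into an analytic differential operator produces an analytic function of $x$, so the ``defect'' of being a solution is itself analytic and can be tested on any open set.

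More precisely, I would define
\begin{equation*}
G(x) := \triangle\bigl(x, \hat{f}^{(m)}(x)\bigr), \qquad x \in \Omega,
\end{equation*}
viewed as a function from $\Omega$ into $\mathbb{R}^{\ell}$. The first step is to check that $G \in C^{\omega}(\Omega)$: since $\hat{f}$ is analytic on $\Omega$, each partial derivative appearing in $\hat{f}^{(m)}$ is analytic on $\Omega$, and $\triangle$ is analytic in all of its arguments, so the composition $G$ is analytic on $\Omega$. (Here I use the standard fact that sums, products, and compositions of analytic functions are analytic, together with the fact that the partial derivative of an analytic function is analytic.)

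The second step is to use the hypothesis that $f$ solves (\ref{most general}) in a neighborhood $U$ of $p$. Since $\hat{f}$ is an analytic extension of $f$, we have $\hat{f}(x) = f(x)$ for all $x \in U$, and therefore all partial derivatives agree on $U$ as well: $\hat{f}^{(m)}(x) = f^{(m)}(x)$ for $x \in U$. Consequently $G(x) = \triangle(x, f^{(m)}(x)) = 0$ for every $x \in U$. So each component $G_j$ of $G$ is analytic on the connected open set $\Omega$ and vanishes identically on the open ball $U \subset \Omega$.

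The third and final step is to invoke the identity theorem (Theorem~\ref{identity thm}) componentwise: each $G_j \equiv 0$ on all of $\Omega$. Hence $\triangle(x, \hat{f}^{(m)}(x)) = 0$ throughout $\Omega$, which is exactly the statement that $\hat{f}$ satisfies (\ref{most general}). I do not foresee any real obstacle here; the only point that requires a moment's care is the verification that $G$ is genuinely analytic on the whole of $\Omega$ (not merely on $U$), which relies on $\hat{f}$ being analytic on $\Omega$ and $\triangle$ being analytic on the ambient jet space containing the image of $(x, \hat{f}^{(m)}(x))$.
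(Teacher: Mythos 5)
Your proposal is correct and follows essentially the same route as the paper's own proof: both define the defect $\triangle(x,\hat f^{(m)}(x))$, observe it vanishes near $p$ because $\hat f$ agrees with $f$ there, and conclude by the identity theorem. You simply spell out the analyticity of the composition more explicitly than the paper does.
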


\begin{proof} On a small neighborhood of $p,$
\begin{equation*}\label{proof eq} 0 = \triangle(x,f^{(m)}(x)) = \triangle(x,\hat{f}^{(m)}(x)).\end{equation*} Hence by the identity theorem   $\triangle (x,\hat f^{(m)}(x)) = 0 $ for all $x\in \O.$
\end{proof}

\subsection{First integrals and invariant submanifolds}
   In this subsection we define the notions of first integral and invariant submanifold in smooth ($C^{\infty})$ category and from  the local  viewpoint, namely,   our functions, vector fields, and submanifolds  are in $C^{\infty}$-category  and defined on a small open set of $\Bbb R^N.$   Thanks to Theorem \ref{fundamental},   the Cauchy-Kowalevski theorem, and other basic facts on  analytic functions,  all the statements in this subsection hold true in analytic category as well.

 A system of  $C^{\infty}$  real-valued functions $\vec{\r} :=(\r_1, \ldots, \r_d)$ that are defined on an open subset  $U\subset \Bbb R^N$ is said to be {\it non-degenerate} if
\begin{equation*}\label{nondeg} d\r_1\we \cdots \we d\r_d \neq 0.\end{equation*}  Then their common zero set ${\vec \r} = 0 $ is a smooth submanifold of $U$ of codimension $d$.
Conversely, a submanifold of codimension $d$ is locally the common zero-set of a  non-degenerate system of  $d$ real-valued functions
${\vec \r} = (\r_1, \ldots, \r_d),$  which we call  local defining functions of the submanifold.

 Given a $C^{\infty}$  nowhere vanishing vector field
\begin{equation}\label{vf} X = \sum_{j=1}^N a_j (x) \frac{\pa}{\pa x_j} \end{equation}    with $C^{\infty}$  coefficients $a_j $ defined on an open subset of $\mathbb R^N =\{(x_1, \ldots, x_N)\}, $    a real-valued function $\r$ is called a {\it{first integral}} of $X$ if $\r$ is invariant under the flow of $X$, that is,  if
\begin{equation*}  X\r = 0.\end{equation*}

\begin{proposition}  Suppose that $X$ is a $ C^{\infty} $ nowhere vanishing vector field defined on an open subset of $\Bbb R^N$. Then

\begin{itemize}

\item[i)] there exists locally a non-degenerate set of $N-1$ first integrals
\begin{equation*} \label{nondeg first} {\vec \r}  = (\r_1, \cdots, \r_{N-1}) .
\end{equation*}

\item[ii)]  A $C^{\infty}$  function $F$  is invariant under the flow of $X$  if and only if
$F$ is a function of $\r_j$'s, that is,  there exists a $C^{\infty}$  function $f$ in $N-1$ variables so that
\begin{equation*} F = f\circ \vec{\r}.
\end{equation*}
\end{itemize}
\end{proposition}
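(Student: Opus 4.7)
The plan is to straighten $X$ locally via a flow-box construction and then read off both assertions from the trivial case $X=\pa/\pa s$. Fix a point $p$ in the domain of $X$. Since $X(p)\neq 0$, after relabeling coordinates I may assume the coefficient $a_1(p)\neq 0$ in (\ref{vf}). I would then take the transverse hyperplane $H:=\{x_1=p_1\}$ parametrized by $(y_2,\ldots,y_N)$ and, using Theorem~\ref{fundamental} (applied to the enlarged system obtained by appending $\dot y_j=0$, $j\ge 2$, so as to get smooth dependence on the initial data together with the time parameter), solve
\begin{equation*}
\frac{d\ph}{ds}=X(\ph),\qquad \ph(0)=(p_1,y_2,\ldots,y_N).
\end{equation*}
Define $\Ph(s,y_2,\ldots,y_N):=\ph(s;y_2,\ldots,y_N)$. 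At $(s,y_2,\ldots,y_N)=(0,p_2,\ldots,p_N)$ the Jacobian of $\Ph$ is block-triangular with $(1,1)$-entry $a_1(p)\neq 0$ and identity block in the remaining variables, so by the analytic inverse function theorem $\Ph$ is a local diffeomorphism near the origin.

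Next, let $\Ph^{-1}(x)=(s(x),y_2(x),\ldots,y_N(x))$ be the inverse coordinates. Since $\Ph(\cdot,y)$ is by construction an integral curve of $X$, in these coordinates the vector field is exactly $X=\pa/\pa s$. Setting
\begin{equation*}
\r_j(x):=y_{j+1}(x),\qquad j=1,\ldots,N-1,
\end{equation*}
I would verify directly that $X\r_j=\pa y_{j+1}/\pa s=0$ and that $d\r_1\we\cdots\we d\r_{N-1}=dy_2\we\cdots\we dy_N\ne 0$ because $(s,y_2,\ldots,y_N)$ are honest coordinates. This establishes part~(i).

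For part~(ii), the "if" direction is immediate from the chain rule: if $F=f\circ\vec\r$ then $XF=\sum_j (\pa_j f)\cdot X\r_j=0$. For "only if", in the straightened coordinates $XF=0$ reads $\pa F/\pa s=0$, so $F$ depends only on $(y_2,\ldots,y_N)=(\r_1,\ldots,\r_{N-1})$; explicitly, $f:=(F\circ\Ph)|_{s=0}$ is $C^{\infty}$ and $F=f(\r_1,\ldots,\r_{N-1})$. The main obstacle, if there is one, is really bookkeeping: Theorem~\ref{fundamental} as stated only asserts existence and uniqueness with respect to the time variable, so some care is needed to obtain joint smoothness of $\Ph$ in $(s,y_2,\ldots,y_N)$. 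The standard trick of enlarging the ODE by the trivial equations $\dot y_j=0$ packages this into a direct application of Theorem~\ref{fundamental}, after which everything else reduces to the inverse function theorem and the identity theorem.
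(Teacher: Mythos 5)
Your argument is correct; the paper gives no proof of its own here and simply refers to Warner, whose proof is exactly this flow-box/straightening construction, so you have reproduced the intended argument (including the correct Jacobian computation and the reduction of (ii) to $\pa F/\pa s=0$ on a product neighborhood). The only loose end is that joint smoothness of $\Ph$ in $(s,y_2,\ldots,y_N)$ really rests on the standard theorem on smooth dependence of ODE solutions on initial conditions --- appending $\dot y_j=0$ to Theorem~\ref{fundamental} converts parameter dependence into initial-data dependence but does not by itself upgrade that theorem, which as stated only gives existence and uniqueness for each fixed initial point --- yet this is classical and is exactly what the cited references supply.
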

\noindent  See \cite{War} for the proof.
\vskip 1pc

 A $C^{\infty}$  curve $x(t) = (x_1(t), \cdots, x_N(t))  $ for $t$ in some interval   is an integral curve of $X$ if
$$ \frac{d { x} (t)}{dt} = X( x(t)).$$

  A submanifold $M\subset \Bbb R^N $ is said to be an invariant submanifold of $X$ if $M$ is invariant under the flow of $X$, that  is,
for any integral curve $ x(t) $ with $x (0)\in M$ we have $ x(t) \in M $ for all $t$.
 $M$ is an invariant
submanifold of $X$ if and only if $X$ is tangent to $M$ at every point of $M$.
  An   integral curve is an invariant submanifold of dimension $1$.   Every level set of a non-degenerate
system of
first integrals  $(\r_1, \ldots, \r_d)$ is an invariant submanifold of codimension $d$.    The following  is rather obvious:

\begin{proposition} \label{invariant hyper} Let $\r$    be a  $C^{\infty}$  function with  $ d\r\neq 0  $ defined on an open subset of $\Bbb R^N$ and  $M$ be the zero set of $\r$.  Then  $M$ is an invariant hypersurface of a non-vanishing vector field $X$   if and only if
\begin{equation} \label{generalized}
(X\r) (x) = 0   \quad\text{for all } x \text{ with } \r(x)=0.\end{equation}
\end{proposition}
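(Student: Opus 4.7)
The plan is to prove the two implications separately, both by reducing to the observation that $d\rho$ spans the conormal to $M$ at every point of $M$ (which is legitimate since we are told $d\rho \neq 0$, so $M$ really is a smooth hypersurface with $T_pM = \ker d\rho(p)$ for each $p \in M$).

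For the forward implication, I would pick an arbitrary $p \in M$ and let $x(t)$ be the integral curve of $X$ with $x(0) = p$. By invariance $x(t) \in M$ for all $t$ in the domain, so the function $g(t) := \rho(x(t))$ is identically zero. Differentiating at $t = 0$ yields
\begin{equation*}
0 = g'(0) = \sum_{j=1}^N a_j(p)\,\frac{\pa \rho}{\pa x_j}(p) = (X\rho)(p),
\end{equation*}
which gives the required vanishing on $M$.

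For the converse, I would use the hypothesis $(X\rho)(p) = 0$ for $p \in M$ to observe that $X(p) \in \ker d\rho(p) = T_pM$, i.e., $X$ is tangent to $M$ at every point of $M$. Hence the restriction $X|_M$ is a smooth nowhere-vanishing vector field on the smooth hypersurface $M$, and by Theorem~\ref{fundamental} (applied in local coordinates on $M$) it admits integral curves lying entirely in $M$. For any $p \in M$, such an integral curve in $M$ is also an integral curve of $X$ in the ambient space, and uniqueness of solutions to ODEs forces it to coincide with the ambient integral curve through $p$. Thus every integral curve of $X$ starting in $M$ remains in $M$, which is exactly invariance.

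The main technical point, and the only place the hypothesis $d\rho \neq 0$ is really used, is in identifying $\ker d\rho(p)$ with $T_pM$ so that $X\rho = 0$ on $M$ can be upgraded from a single scalar equation to the geometric statement that $X$ is tangent to $M$. Everything after that is a routine application of ODE uniqueness. If one preferred a more computational route for the converse, an alternative would be to write $X\rho = h\rho$ locally (via Hadamard's lemma, valid because $d\rho \neq 0$ makes $\rho$ part of a coordinate system) and then observe that $g(t) = \rho(x(t))$ satisfies the linear ODE $g'(t) = h(x(t))\,g(t)$ with $g(0) = 0$, forcing $g \equiv 0$; I would mention this only as a backup since the tangent-vector-field argument is cleaner.
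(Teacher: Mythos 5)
Your proof is correct and follows exactly the route the paper has in mind: the paper in fact gives no proof, calling the proposition ``rather obvious'' immediately after asserting that invariance of $M$ is equivalent to tangency of $X$ along $M$, and your argument is precisely the standard justification of that equivalence via $\ker d\rho(p)=T_pM$ in one direction and differentiation of $\rho$ along integral curves in the other. The only detail worth tightening is the final ``thus'' in the converse: the integral curve of $X|_M$ a priori coincides with the ambient integral curve only on a subinterval, so to conclude that the ambient curve stays in $M$ for its entire lifespan you should note that $\{t : x(t)\in M\}$ is nonempty, closed in the parameter interval (since $M$ is closed in the open set where $\rho$ is defined), and open (by your local uniqueness argument), hence equals the whole connected interval.
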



\section{Solution by means of invariant submanifolds}\label{psec3}
\vskip 1pc
In this section we present an analytic $(C^{\o})$  version of the method of characteristics for quasi-linear equations of first order.   Some relevant results are found in \cite{HK} and \cite{HP}.
 Coming back to the Cauchy problem    (\ref{pde})-(\ref{data})    consider the vector field
\begin{equation}\label{char1}  X:= \a \frac{\partial}{\partial t} +  \sum_{k=1}^n a_k \frac{\partial}{\partial x_k} + b \frac{\partial}{\partial u},
\end{equation}
\noindent which we call the characteristic vector field of (\ref{pde}).   The method of characteristics is to construct  the graph of the solution of (\ref{pde})-(\ref{data}) by finding integral curves of (\ref{char1}) starting from each point of the initial set $\G$.   It is the problem of solving the following system of initial value problems of ordinary differential equations:
\begin{equation}\label{characteristic}\begin{aligned}
\dot{t} &= \a,  ~~ \quad  t(0) = 0 \\
\dot{x_k} &= a_k,  \quad x_k(0)= s_k, \quad  k=1, \cdots, n \\
\dot{u} &= b, \quad u(0) = h(s), ~~ s:= (s_1, \cdots, s_n),
\end{aligned} \end{equation}
where $\dot{x }$ means the derivative of $x$ with respect to the  parametrization   of the  curves.
 The solution of (\ref{characteristic}) is an $n$-parameter family  parameterized by $s=(s_1, \cdots, s_n) $ of integral curves of the characteristic vector field. Therefore,  solving (\ref{characteristic}) is  equivalent to finding a hypersurface $\Sig \subset \Bbb R^{n+2}$ such that

\begin{itemize}
\item[i)] $\G \subset \Sig, $  where $\G = \{(0, s, h(s)): |s|<\e\}$ is the initial set.

\item[ii)] $\Sig$ is invariant under the flow of $X.$
\end{itemize}

 The graph of a solution to (\ref{pde})-(\ref{data}) is part of $\Sig.$  We first prove the
 following
\begin{theorem} \label{inv hyper} Let $F(t,x,u)$ be an analytic  real-valued function on an open set   $\O\subset\Bbb R^{n+2} = \{(t,x,u)\} $
such that $F_u \neq 0.$     Then  $F(t,x,u) = 0 $  is an implicit solution of (\ref{pde}) if and only if the zero set of
 $F$  is  invariant under the flow of  (\ref{char1}).
\end{theorem}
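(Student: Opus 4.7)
The plan is to reduce the theorem to a direct computation by combining two ingredients already available: Proposition \ref{invariant hyper} (which characterizes invariant hypersurfaces by the condition $XF = 0$ on $\{F = 0\}$) and the analytic implicit function theorem (Theorem \ref{implicit function}), which applies because $F_u \neq 0$.

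First I would fix a point $(t_0, x_0, u_0)$ of the zero locus and invoke Theorem \ref{implicit function} to produce a $C^{\omega}$ function $u = u(t,x)$, defined on a neighborhood of $(t_0, x_0)$, with $F(t, x, u(t,x)) \equiv 0$. Differentiating this identity with respect to $t$ and with respect to $x_k$ gives
\begin{equation*}
F_t + F_u\, u_t = 0, \qquad F_{x_k} + F_u\, u_{x_k} = 0,
\end{equation*}
so that $u_t = -F_t/F_u$ and $u_{x_k} = -F_{x_k}/F_u$ in a neighborhood of $(t_0, x_0)$.

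Next I would substitute these expressions into the left-hand side of (\ref{pde}) and clear denominators by multiplying by $F_u$. The PDE (\ref{pde}) evaluated at $(t,x,u(t,x))$ then becomes
\begin{equation*}
\alpha F_t + \sum_{k=1}^n a_k F_{x_k} + b F_u = 0,
\end{equation*}
evaluated along $u=u(t,x)$, which is precisely the condition $(XF)(t,x,u) = 0$ on the zero set of $F$. Since $F_u$ is nowhere zero on $\Omega$, this equivalence is sharp: the PDE holds for $u(t,x)$ if and only if $XF$ vanishes identically on $\{F = 0\}$ in a neighborhood of $(t_0, x_0)$. Applying Proposition \ref{invariant hyper} to $\rho := F$ (which has $d\rho \neq 0$ because $F_u \neq 0$) translates the latter condition into $\{F=0\}$ being invariant under the flow of $X$, giving both directions of the equivalence.

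I don't anticipate a genuine obstacle; the whole argument is a one-line chain once the implicit differentiation is in hand. The only point that warrants care is to note that the computation is local at each point of $\{F=0\}$, but the statement of the theorem (invariance of the zero set, and the identity $F=0$ implicitly defining a solution) is local at every point, so no global patching is needed. The nondegeneracy $F_u \neq 0$ is used twice: once to invoke the implicit function theorem, and once to divide by $F_u$ (equivalently, to pass from $XF = 0$ on $\{F=0\}$ back to the PDE without picking up extraneous solutions).
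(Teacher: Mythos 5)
Your proposal is correct and is essentially the paper's own argument: both fix a point of the zero locus, apply the analytic implicit function theorem, differentiate $F(t,x,u(t,x))=0$ to get $u_t=-F_t/F_u$ and $u_{x_k}=-F_{x_k}/F_u$, and then observe that $XF = F_u\bigl(-\alpha u_t - \sum_k a_k u_{x_k} + b\bigr)$ on the zero set, so that (via Proposition \ref{invariant hyper} and $F_u\neq 0$) the PDE holds if and only if the zero set is invariant. The only cosmetic difference is that you run the two directions as a single equivalence chain where the paper writes them out separately.
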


\begin{proof}

Suppose $F(t,x,u) = 0 $ is an implicit solution.  Let $ p=(t_0, x_0, u_0) $ be an arbitrary point of the zero set of $F$.
To show that the zero set of $F$ is invariant under the flow of the characteristic vector $X,$  it is enough  by Proposition \ref{invariant hyper}
 to show that
\begin{equation*} X(p) F = 0.\end{equation*}
Let $u(t,x)$ be as in Theorem \ref{implicit function}, namely, $u(t_0, x_0)=u_0 $ and
\begin{equation}\label{main6} F(t,x,u(t,x)) = 0.\end{equation}
Differentiating (\ref{main6}) with respect to $t$ and $x_k$, respectively,  we have
\begin{equation}\label{main7}
 \begin{aligned}
F_t + F_u u_t  &= 0 \\
F_{x_k} + F_u u_{x_k} &=0, ~~ k=1, \cdots, n.
\end{aligned}\end{equation}
Therefore, \begin{equation}\label{main8} \begin{aligned} X(p)F & =
 \a (p) F_t + \sum_{k=1}^n a_k(p) F_{x_k} + b(p) F_u \\
&= F_u(p) \left( - \a(p) u_t  - \sum_{k=1}^n a_k(p) u_{x_k} + b(p) \right)  \quad \text{by  } (\ref{main7})   \end{aligned}
\end{equation} where the last line is zero because $u(x,t)$ is an explicit solution to (\ref{pde}).

Conversely,  suppose that the zero set of $F$ is invariant under the flow of the characteristic vector (\ref{char1}).
It suffices to show that at an arbitrary point $p=(t_0, x_0, u_0)$ of the zero set of $F$,  the function $u(t,x)$ as in Theorem \ref{implicit function}
with $u(t_0, t_0)=u_0 $  is an explicit solution to (\ref{pde}).  Again, by differentiating $F(t,x,u(t,x)) = 0 $ with respect to $t$ and with respect to $x_k$, we have (\ref{main7}).   Then as in (\ref{main8}) we have
\begin{equation}\label{main9} \begin{aligned} 0 &= X(p) F \\
&=   F_u(p) \left( - \a(p) u_t  - \sum_{k=1}^n a_k(p) u_{x_k} + b(p) \right).   \end{aligned}
\end{equation}
The last line of (\ref{main9}) being zero and $F_u\neq 0 $ imply that $u(t,x)$ is the explicit  solution to (\ref{pde}).
\end{proof}

Now we find  $F(t,x,u)$ that defines an  invariant hypersurface  as  in Theorem \ref{inv hyper}  by  using  the first integrals of (\ref{char1}) as follows:
Let  ${\vec \r} = (\r_1, \cdots, \r_{n+1})$  be a nondegenerate system of  $C^{\o}$   first integrals of (\ref{char1})
defined
on a neighborhood of the initial set $\G$  as in (\ref{gamma1}).

  Consider the mapping
\begin{equation}\label{char9} \Bbb R^{n+2} \ni (t,x_1, \cdots, x_n, u) \stackrel{\vec\r}{\mapsto} (\r_1, \cdots, \r_{n+1}) \in \Bbb R^{n+1}.
\end{equation}
Since $\vec \r$ is nondegenerate, $\vec\r(\Gamma) $ is a $C^{\o}$  hypersurface in $\Bbb R^{n+1}$, thus there exists an analytic  function $f$ so that
$\vec\r(\Gamma)$ is contained in  the zero set of $ f$.  Now let
\begin{equation}\label{char11}  F:= f \circ \vec \rho. \end{equation}
\noindent Then $F$ itself is a first integral of (\ref{char1}) that is non-degenerate and   vanishes on $\Gamma. $
Thus we proved the following
\begin{proposition}\label{main10}
Let $\vec \r :=(\r_1, \cdots, \r_{n+1}) $ be a nondegenerate system of $C^{\o}$ first integrals of the characteristic vector field (\ref{char1})
defined on a neighborhood of the initial set $\G.$   Let $f$ be an analytic  local defining function of the   hypersurface $\vec \r (\G) \subset \Bbb R^{n+1}. $      Then the zero set of
$ F(t,x,u) := f \circ \vec{\r} $  has the  following properties:
 \begin{itemize}

\item[i)]$ F(\G) = 0,$

\item[ii)] the zero set of $F$ is invariant under the flow of (\ref{char1}).
\end{itemize}
\end{proposition}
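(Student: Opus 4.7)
The plan is to verify the two asserted properties of $F = f\circ\vec\rho$ by direct computation, the main ingredient being the chain rule together with the first-integral condition $X\rho_j = 0$ for $j = 1, \ldots, n+1$.

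For property (i), I would observe that the claim is essentially tautological. By hypothesis $f$ is a local defining function for the hypersurface $\vec\rho(\Gamma)\subset\mathbb{R}^{n+1}$, so in particular $f$ vanishes identically on $\vec\rho(\Gamma)$; composing with $\vec\rho$ then gives $F(p) = f(\vec\rho(p)) = 0$ for every $p\in\Gamma$. The only preparation required here is the standard fact, already available via the analytic implicit function theorem (Theorem~\ref{implicit function}) and the nondegeneracy $d\rho_1\wedge\cdots\wedge d\rho_{n+1}\neq 0$, that $\vec\rho(\Gamma)$ really is a smooth $C^{\omega}$ hypersurface in $\mathbb{R}^{n+1}$, so that such an $f$ exists.

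For property (ii), my approach is to prove the stronger statement that $XF\equiv 0$ on the entire neighborhood where $\vec\rho$ and $f$ are defined, which then immediately gives invariance of $\{F=0\}$ under the flow. Applying the chain rule and then using that each $\rho_j$ is a first integral of $X$, I would write
\[
XF \;=\; X(f\circ\vec\rho) \;=\; \sum_{j=1}^{n+1} \frac{\partial f}{\partial \rho_j}\!\bigl(\vec\rho\bigr)\,\cdot\, X\rho_j \;=\; 0.
\]
Hence $F$ is itself a first integral of $X$, and therefore is constant along every integral curve of $X$; in particular, any integral curve of $X$ passing through a point of $\{F=0\}$ remains in $\{F=0\}$, which is exactly invariance under the flow. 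Equivalently, this verifies the criterion of Proposition~\ref{invariant hyper} on the zero set; the hypothesis $dF\neq 0$ required by that proposition is inherited from $df\neq 0$ on $\vec\rho(\Gamma)$ together with the linear independence of the $d\rho_j$, so $\{F=0\}$ is moreover a genuine hypersurface in a neighborhood of $\Gamma$.

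Because both conclusions reduce to the identity $X(f\circ\vec\rho)=0$ via the chain rule, I do not anticipate any substantive obstacle in the argument; the only real content is the observation that any smooth function of first integrals is itself a first integral. The nontrivial work has already been done upstream, in producing the nondegenerate system $\vec\rho$ and in identifying $\vec\rho(\Gamma)$ as a smooth hypersurface on which one can invoke the analytic implicit function theorem to construct $f$.
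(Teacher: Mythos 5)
Your proof is correct and follows essentially the same route as the paper: the paper also observes that $F=f\circ\vec\rho$ is itself a first integral of $X$ (a function of first integrals is a first integral, i.e.\ $XF=\sum_j (\partial f/\partial\rho_j)\,X\rho_j=0$), vanishes on $\Gamma$ by construction of $f$, and hence has an invariant zero set. Your added remarks on $dF\neq 0$ and on $\vec\rho(\Gamma)$ being a hypersurface are at the same level of detail as the paper's own (brief) justification.
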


Now let $\pi: (t,x,u) \mapsto (t,x) $ be the projection.
From the invariant set $F(t,x,u) = 0 $ as in Proposition \ref{main10} we find the maximal domain of analytic extension  of the local explicit solution $u(t,x)$   as follows:   Let $\O\subset \Bbb R^{n+2} $ be the open set where the coefficients
 $\a, $  $ a_k $ and   $b$ of   (\ref{pde}) are defined and
  $\Sig^{n+1} \subset \O $ be the connected component of the zero set of
$ F$ that contains $\G^n,$
and $\sig^n \subset \Sig^{n+1}$ be the set of points where  $ F_u(t,x,u) = 0. $  Then
  $  \Sigma^{n+1}\setminus \sig^n \stackrel{\pi}{ \rightarrow}  \pi(\Sig^{n+1}\setminus \sig^n)$
is a local diffeomorphism by the implicit function theorem. Now let $\Sig_{\G}$ be the connected component of
$\Sig^{n+1}\setminus \sig^n $ that contains $\G^n$.
  We fix a point $P\in \G^n$ and let  $Q \in \Sig^{n+1} $ be any point. If  $Q\in\Sig_{\G}$ then there is a curve in $\Sig_{\G}$ that connects  $P$ to $Q$,  which implies that the local solution $u(t,x)$ to (\ref{pde})-(\ref{data}) analytically continues to $\p(Q)$ by the analytic implicit function theorem.  If $Q\in \Sig^{n+1}\setminus \Sig_{\G}$ then any curve in $\Sig^{n+1}$ that connects $P$ to $Q$ intersects $\sig^n$, which implies that $u(t,x)$ does not continue analytically to $\p(Q)$  because at the intersecting point $|\nabla u|$ blows up by the implicit function theorem. Thus we  proved the following

\begin{theorem}\label{main} Given a quasi-linear PDE of first order (\ref{pde})  where the coefficients $\a, $ $a_j $ and $b$ are analytic on an open set $\O\subset \Bbb R^{n+2} =\{(t,x, u)\}$,  $x=(x_1, \cdots, x_n), $  let $X$ be the characteristic vector field on $\O$ given by (\ref{char1}),
and let  $\G^n=\{(0, x, h(x)):  |x| < \e \}$
be given an initial set   where  $h$ is analytic.
Suppose that there exists a set of first integrals  $\vec{\r} = (\r_1, \cdots, \r_{n+1})$ of $X$ that are defined in $\O$ and nondegenerate   on
a neighborhood of $\G^n$.
Let $f$ be a defining function of  $\vec{\r}(\G^n)$ that is   analytic in  $\vec{\r}(\O)  $
 and let $F = f\circ \vec\r.$ Let
$ \Sigma^{n+1} $ be the connected component of the zero set of $F$ that contains $\G^n$
 and $ \sig^n := \{(t,x,u)\in \Sig^{n+1} | ~ F_u(t,x,u) = 0\} $
and    $\pi:  \Bbb R^{n+2} \rightarrow \Bbb R^{n+1} $ be the projection
$(t,x,u) \stackrel{\pi}{\rightarrow} (t,x). $
 Then  we have
\begin{itemize}

\item[i)]$F(t,x,u) = 0 $ is  an implicit solution to (\ref{pde})-(\ref{data}),

\item[ii)]$  \Sigma^{n+1}\setminus \sig^n \stackrel{\pi}{ \rightarrow}  \pi(\Sig^{n+1}\setminus \sig^n)$
is a local diffeomorphism,

\item[iii)]Let $\Sig_{\G}$ be the connected component of $\Sig^{n+1} \setminus \sigma^n$   that contains $\G^n.$   Then $\Sig_{\G}$  is the graph of the maximal analytic extension of  the local solution
$u(t,x)$ to (\ref{pde})-(\ref{data}),

\item[iv)]$\p(\Sig_{\G})$ is the maximal domain of analytic extension of the local solution to (\ref{pde})-(\ref{data}).
\end{itemize}
\end{theorem}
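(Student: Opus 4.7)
The plan is to verify the four assertions (i)--(iv) in order, leveraging Proposition \ref{main10}, Theorem \ref{inv hyper}, Theorem \ref{implicit function}, and the identity theorem (Theorem \ref{identity thm}).

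Claim (i) is essentially a rephrasing of what has already been set up. By Proposition \ref{main10}, $F$ vanishes on $\Gamma$ and its zero set is invariant under the flow of $X$. Near any point of $\Sigma^{n+1}\setminus\sigma^n$ we have $F_u\neq 0$, so Theorem \ref{implicit function} produces a local analytic $u(t,x)$ with $F(t,x,u(t,x))=0$, and Theorem \ref{inv hyper} then forces $u$ to solve (\ref{pde}). Agreement with $h$ on $\{t=0\}$ comes from $F(0,x,h(x))=0$ together with the uniqueness clause of the implicit function theorem. Claim (ii) is immediate: at each point of $\Sigma^{n+1}\setminus\sigma^n$ we have $F_u\neq 0$, so the analytic implicit function theorem yields a local analytic inverse for $\pi$.

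For (iii) I would argue two inclusions. In the forward direction, given $Q\in\Sigma_\Gamma$, fix a continuous path in $\Sigma_\Gamma$ from a point $P\in\Gamma^n$ to $Q$, cover it by finitely many neighborhoods on which $\pi$ is an analytic diffeomorphism (by (ii)), and patch the corresponding local solutions on overlaps using Theorem \ref{identity thm}. This produces an analytic continuation of the local solution along the path, whose value at $\pi(Q)$ is the $u$-coordinate of $Q$. For the reverse direction, let $\hat u$ be any single-valued analytic extension of the local solution to a connected open set $U$; then $(t,x)\mapsto F(t,x,\hat u(t,x))$ is analytic on $U$ and vanishes near the slice $\{t=0\}$, so it vanishes identically on $U$ by the identity theorem. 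Hence the graph of $\hat u$ lies in the zero set of $F$; being connected and meeting $\Gamma^n$, it is contained in $\Sigma^{n+1}$, and because $\hat u$ is a single-valued analytic function of $(t,x)$, $F_u$ cannot vanish along its graph (else the graph could not coincide locally with the implicit solution), so the graph lies in $\Sigma_\Gamma$. Claim (iv) then follows by projecting: $\pi(\Sigma_\Gamma)$ is the set of base points reachable by single-valued analytic continuation, and its boundary points are singular, since any continuation across such a point would force either a meeting with $\sigma^n$ (where $F_u\to 0$ and $|\nabla\hat u|\to\infty$ by the implicit function theorem) or an exit from $\Omega$.

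The step that I expect to require the most care is the reverse inclusion in (iii): the claim that the graph of any single-valued analytic extension lies in $\Sigma_\Gamma$ rather than merely in the zero set of $F$. The crucial input is that single-valued analyticity of $\hat u$ near a given point rules out $F_u=0$ at the corresponding point of the graph, since vanishing of $F_u$ combined with invariance of the zero set would permit two branches of the solution to coalesce there. Once this is settled, the projection argument for (iv) is routine, and the invocations of the identity theorem and the implicit function theorem supply the rest.
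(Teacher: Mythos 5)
Your proposal follows essentially the same route as the paper: (i) and (ii) come from Proposition \ref{main10}, Theorem \ref{inv hyper} and the analytic implicit function theorem, the forward half of (iii) is obtained by continuing the local solution along a path in $\Sigma_{\Gamma}$ covered by finitely many charts on which $\pi$ is an analytic diffeomorphism, and maximality comes from the obstruction at $\sigma^n$. Your reverse inclusion in (iii) --- that the graph of any single-valued analytic extension lies in the zero set of $F$ by the identity theorem and must avoid $\sigma^n$ --- is actually spelled out more explicitly than in the paper, which simply asserts that $|\nabla u|$ blows up at points of $\sigma^n$; both arguments rest on the same tacit assumption that $\nabla F\neq 0$ along $\sigma^n$ (which holds where $\vec\rho$ is nondegenerate and $df\neq 0$), so the step you flag as delicate is no weaker than the paper's own treatment.
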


\begin{example}\label{ODE}   Case $n=0$: This is the case of  ordinary differential equations.     For   a real-valued function $u(t)$ consider
\begin{equation}\label{ODE2} u' = u^2 \end{equation} subject to the initial condition
\begin{equation}\label{ODE3} u(0) = 1.\end{equation}
It is easy to find  the solution  $\displaystyle{ u = \frac{1}{1-t}}, $  so that the maximal domain is $(-\infty, 1).$      Now we shall obtain the same by  our theory and Theorem~\ref{main}.  The characteristic vector field is
\begin{equation*} X = \frac{\pa}{\pa t} + u^2 \frac{\pa}{\pa u}
\end{equation*}  so that
\begin{equation*}
\r := t+\frac{1}{u}
\end{equation*} is a first integral.   On the initial data set $\G^0 = \{(0,1)\} $  $\r$ has value $1$, so that $f (y) = y-1$ and
\begin{equation*}\label{ODE4} F := f \circ \r =  \r -1 = t + \frac{1}{u} - 1.
\end{equation*}   Now  $ F = 0 $ is an implicit solution to (\ref{ODE2})-(\ref{ODE3})
and the connected component of $\G^0$  in $F=0$ is the set
$$\Sig^1 = \{(t,u)\in \Bbb R^2  ~:~ t+\frac{1}{u} - 1 = 0, ~ t<1\}.$$
We see further that $\sig^0  $ is empty,  which implies  $\Sig_{\G} = \Sig^1 $,  and that $\pi(\Sig_{\G}) =(-\infty, 1).$
\end{example}

\begin{example} \label{example1} Case $n=1.$   For a real-valued function $u (t,x) $ of two real variables  consider
\begin{equation}\label{exeq1} u u_t = -t \end{equation} subject to the initial condition
\begin{equation}\label{initial1}  u(0,x)  = \sqrt{1-x^3}, \quad | x| < \epsilon.
\end{equation}

Then the set of initial data is $\G^1 = \{(0,x, \sqrt{1-x^3})~:~ |x| < \e\} $ and the
 characteristic vector field is
\begin{equation}\label{char2}
X:= u \frac{\partial }{\partial t} - t \frac{\partial }{\partial u}.
\end{equation}
Since $X$ has no $\frac{\pa}{\pa x}$ component,  $\r_1(t,x,u):= x $ is  obviously a first integral.  It is easy to find another  first integral $\r_2(t,x,u) := t^2+ u^2.$
This implies  that each characteristic curve is a circle
 $ t^2+u^2 = c_1 $ and  $  x = c_2  $ for constants  $c_1 $ and $ c_2, $
  so that the graph of the solution is  contained
in  a surface of revolution obtained by rotating about $x$-axis.  Now let  $$\vec \r = (\r_1, \r_2): \Bbb R^3 \longrightarrow \Bbb R^2.$$
Since $\vec{\r}(\G^1) = \{(x, 1-x^3)\},  $
$$f (\r_1, \r_2):= \r_2 -1+(\r_1)^3 = 0 \quad \text{on  }  \G^1.$$ Hence

\begin{equation}\label{exeq2} F:= f (\vec \r) =   t^2 + u^2 - 1 + x^3  = 0
\end{equation} is the implicit  solution of (\ref{exeq1}).
 Let $\Sig^2$ be the surface defined by  (\ref{exeq2}).
To find the  set  $\sigma^1$ of the singular points solve  (\ref{exeq2}) together with
\begin{equation}\label{exeq3}  F_u = 2u = 0
\end{equation}   simultaneously.  We have
$$\sigma^1 = \{(t,x,u): u=0,~~ x^3+t^2-1=0\}.$$
  Therefore, the maximal domain of analytic extension  is
\begin{equation}\label{max dom}  1-t^2-x^3 > 0, \end{equation}
 as shown in Figure~\ref{fig1_1} and the maximally extended single-valued solution is
\begin{equation}\label{exeq11} u(t,x) = (1-t^2-x^3)^{1/2}. \end{equation}

\begin{figure}
  \includegraphics[scale=0.4]{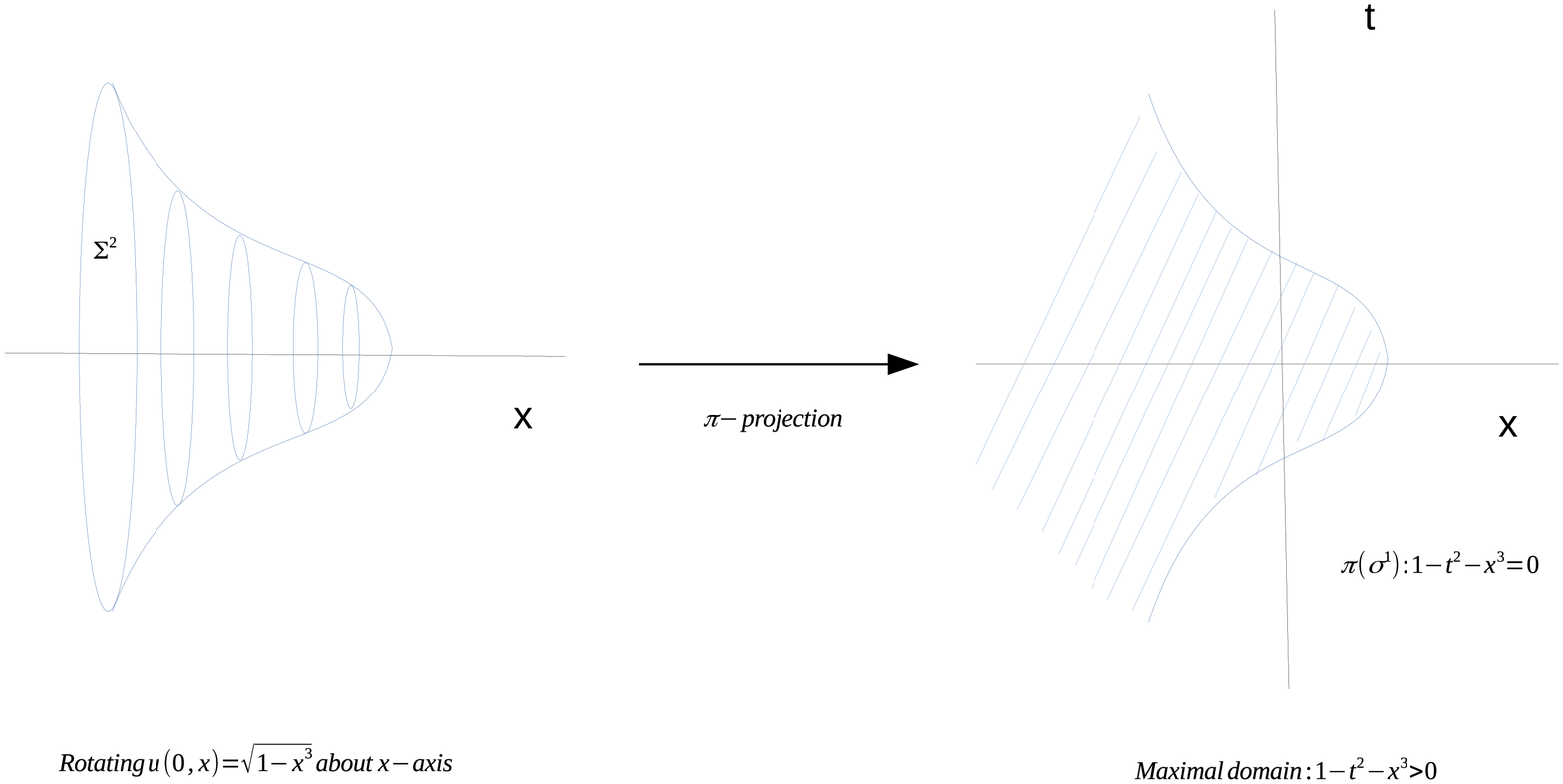}
  \caption{}
  \label{fig1_1}
  \end{figure}
\end{example}

\section{Conservation laws}

A quasi-linear PDE of first order for $u(t,x)$ of the form
\begin{equation*} \label{conserv}  u_t + \Phi(u)_x = 0. \end{equation*} is called a {\it $1$-dimensional conservation law}.  This describes the motion of
 $1$-dimensional flow of fluid  where  $u$ is the conserved density  and $\Phi(u)$ is the flux.
Let us consider the case $\Phi(u) = u^2/2$ so that  the equation becomes
\begin{equation}\label{conserv4}  u_t + u u_x  = 0, \end{equation} which is the inviscid Burgers' equation.  We assume (\ref{conserv4}) holds for all time $-\infty < t < \infty. $
 The  characteristic vector field  of (\ref{conserv4}) is
\begin{equation}\label{conserv5}   X = \frac{\pa}{\pa t} + u \frac{\pa}{\pa x}. \end{equation}
Observe that $X$ has no $\frac{\pa}{\pa u}$ component and   thus $\r_1(t,x,u) := u $ is a first integral.  We see that
$\r_2(t,x,u) := x - ut $ is another first integral and that ${\vec \r} = (\r_1, \r_2)$ is non-degenerate and globally defined.
Now we consider the Cauchy problem (\ref{conserv4}) with the initial condition
$u(0,x) = h(x): ~|x|<\e.$   It is well known that if $h$ is strictly decreasing then the solution $u(t,x)$ has singularities for some positive $t$  (See \cite{CH}, \cite{Evans}).  In this case we observe that the boundary of the maximal domain is the envelope of the projections of the  characteristic lines
(See Example~\ref{example linear ic} and Example~\ref{example2}).

\begin{example} \label{example linear ic} Consider (\ref{conserv4}) with the initial condition
\begin{equation*}\label{linear initial condition} u(0,x) =  - kx, ~ |x|<\e
\end{equation*} where $ k>0 $ is a  constant.
As in Example~\ref{example1} and in Example~\ref{example2} we have $F(t,x,u) = (1-kt)u+kx $  and  $\sig^1$  is the line $t=1/k, x=0$ that is parallel to $u$-axis. The maximal domain of analytic extension of solution is the half plane $\{(t,x):~~~ t<1/k\}.$
\end{example}

\begin{example}\label{example2}  We consider the same equation (\ref{conserv4}) with a decreasing initial data
\begin{equation}\label{conserv10}    u(0,x) = \frac{1}{x+1}, \quad |x|<\e.
\end{equation}
  Let  $\G^1 = \{(0,x,\frac{1}{x+1}):  |x|<\e\} $  be the initial set. Then
$$ \vec{\r}(\G^1) = \{(1 /(x+1), x): |x|<\e\} $$
is given by   $$f(\vec\r) := \r_1 - \frac{1}{\r_2 + 1} = 0.$$  Therefore,  the implicit solution to (\ref{conserv4}) is
\begin{equation} \label{conserv6} F(t,x,u): = f \circ \vec\r = u - \frac{1}{x-ut+1} = 0  \end{equation}
and (\ref{conserv6}) defines $\Sig^2.~ $~   Solving (\ref{conserv6})  for $u$ we have
\begin{equation}\label{algebraic solution} u=
\begin{cases} \begin{aligned}&\frac{x+1 \pm \sqrt{(x+1)^2-4t}}{2t} ~~ &\text{if } t\neq 0\\
&\frac{1}{x+1} 
& \text {if } t=0.\end{aligned}\end{cases}
\end{equation}

To  figure out the shape of $\Sig^2$ we recall that  the quantities $u$ and $x-ut $  remain  constant along the integral curves of (\ref{conserv5}).
These curves are  given by (\ref{characteristic}) with $n=1,$  $\a=1, $ $a=u, $   $b=0, $ $h(s)= \frac{1}{s+1},  $   namely,
     \begin{equation*}\label{characteristic lines} \begin{cases} \dot{t}=1,\quad  t(0)=0\cr
\dot{x} =u, \quad x(0)=s\cr
\dot{u} = 0, \quad u(0)=\frac{1}{s+1}, \end{cases}\end{equation*} which give $1$-parameter family of lines
  \begin{equation}\label{characteristic lines2}
x = s+ut, \quad
u = \frac{1}{s+1}.\end{equation}
Given each value $s,$ (\ref{characteristic lines2}) is the  {\it characteristic line} for (\ref{conserv4}) through $(0, s, \frac{1}{s+1})\in \G^1.$
Projections to the $(t,x)$-plane of some characteristic lines are shown in Figure~\ref{fig3_3}.  On the other hand, by solving  (\ref{conserv6}) and
\begin{equation}
F_u  =  1 - \frac{t}{(x-ut+1)^2} = 0
\end{equation} simultaneously,  we find  that  $ \sigma^1 $ is the curve  $t = \frac{(x+1)^2}{4}, $  $u = \frac{2}{x+1}, $
 so that   $\pi(\sigma^1) $
 is the curve
 \begin{equation} \label{sing2}  t = \frac{(x+1)^2}{4}. \end{equation}    Easy computation shows that for each $s$ the line  (\ref{characteristic lines2}) is tangent to (\ref{sing2}) at the point
\begin{equation}\label{tangent point}
t=(s+1)^2, ~~ x=2(s+1)-1. \end{equation}   Observe  that   $\p(\sig^1)$ is part of the envelope of the projections to $(t,x)$-plane of  (\ref{characteristic lines2}) with $s\neq -1.$
The maximal domain of analytic extension is the shaded area of  Figure~\ref{fig3_3}, which  is the union of characteristic projections with positive slope.
In order for the analytic continuation to be single-valued we take each characteristic projection up to the point (\ref{tangent point}).
\begin{figure}
  \includegraphics[scale=0.4]{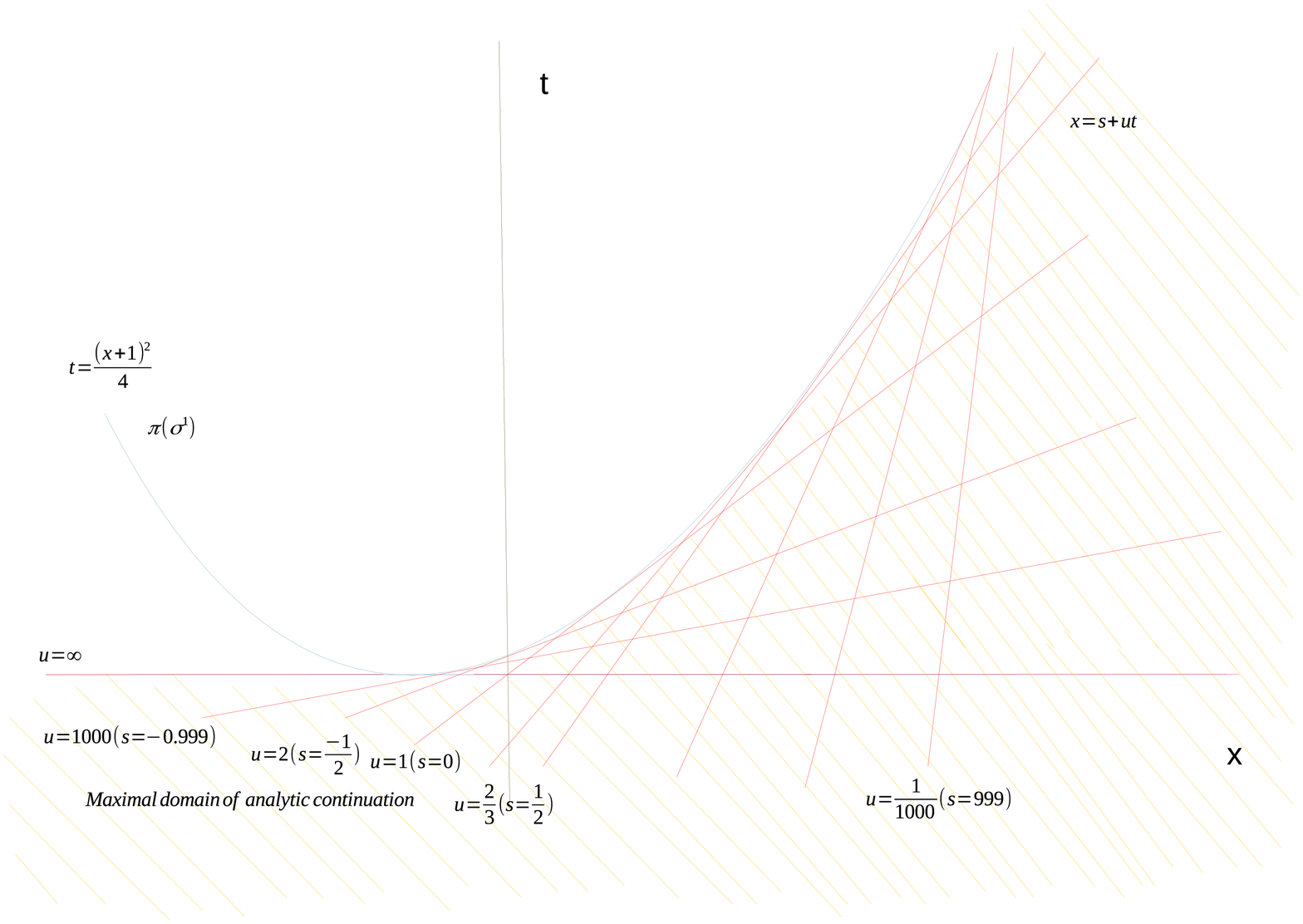}
   \caption{}
   \label{fig3_3}
  \end{figure}
By differentiating (\ref{sing2}) with respect to $t$ we obtain
\begin{equation*}\label{speed} \frac{dx}{dt} = \frac{1}{\sqrt{t}} = \frac{2}{x+1},\end{equation*}
which gives an alternative way of  finding   the speed of propagation of singularity  for analytic cases.
As for the singularities of solutions of $1$-D conservation laws we refer the readers to
 \cite{lax}.
 \end{example}

\end{document}